\newtheorem{thm}{Theorem}[section]
\newtheorem{prop}[thm]{Proposition}
\newtheorem{rem}[thm]{Remark}
\numberwithin{equation}{section}
\newcommand{\aequation}{\renewcommand{\theequation}{\mbox{A.\arabic{equation}}}}
\newcommand{\nequation}{\setcounter{equation}{0}}
\newcommand{\R}{\mathbb{R}}
\newcommand{\C}{\mathbb{C}}
\newcommand{\Bb}{\mathbb{B}}
\newcommand{\Ac}{\mathcal{A}}
\newcommand{\B}{\mathcal{B}}
\newcommand{\ml}{\mathcal{L}}
\newcommand{\ve}{\varepsilon}
\newcommand{\rd}{\mathrm{d}}
\newcommand{\bear}{\begin{eqnarray}} 
\newcommand{\eear}{\end{eqnarray}} 
\newcommand{\bean}{\begin{eqnarray*}} 
\newcommand{\eean}{\end{eqnarray*}} 
\newcommand{\bs}{\begin{split}}
\newcommand{\es}{\end{split}}
\newcommand{\dhr}{\mathrel{\lhook\joinrel\relbar\kern-.8ex\joinrel\lhook\joinrel\rightarrow}}
\begin{document}

\title[]{On a Quasilinear Parabolic-Hyperbolic System Arising in MEMS Modeling}

\author{Christoph Walker}
\email{walker@ifam.uni-hannover.de}
\address{Leibniz Universit\"at Hannover\\ Institut f\" ur Angewandte Mathematik \\ Welfengarten 1 \\ D--30167 Hannover\\ Germany}
 
\date{\today}

\begin{abstract}
A coupled system consisting of a quasilinear  parabolic equation and a semilinear  hyperbolic  equation is considered. The problem arises  as a small aspect ratio limit in the modeling of a MEMS device taking into account the gap width of the device and the gas pressure. The system is regarded as a special case of a more general setting for which local well-posedness of strong solutions is shown. The general result applies to different cases including a coupling of the  parabolic equation to a semilinear wave equation of either second or fourth order, the latter featuring either clamped or pinned boundary conditions.
\end{abstract}

\keywords{Quasilinear parabolic equation, wave equation, well-posedness.}
\subjclass[2010]{35K59, 35L05, 35A01}

\maketitle

\section{Introduction}\label{S1}

Electrostatically actuated micro-electromechanical systems (MEMS) are ubiquitous in today's electronic devices. Idealized MEMS often consist of a fixed ground plate and an elastic membrane (or plate) that are close. Keeping the two components at different potential  induces a Coulomb force deflecting the membrane. In the past two decades MEMS devices have been a highly active  mathematical research focus, in particular due to their interesting qualitative behaviors with respect to pull-in instabilities (as a result from a possible touching of membrane and ground plate) and the inherent challenges related to local and global well-posedness of the corresponding models.  We refer to~\cite{LW_BAMS} and the references therein for more details on MEMS models and their mathematical investigation in general.

In this paper, we consider a model introduced in~\cite{GHL24,GHL23} arising as a small aspect ratio limit of equations governing an electrostatically actuated  MEMS, where the narrow gap separating the membrane and the ground plate is filled with a rarefied gas. More precisely, we consider
\begin{subequations}\label{PPPP} 
\begin{align}
\partial_t (wu) \, &=    \mathrm{div}\big(w^3u\nabla u\big) \,, \qquad t>0\, ,\quad x\in \Omega\, ,\label{P1}\\ 
\partial_t^2 w+\sigma \partial_t w&=\Delta w-\frac{a}{w^2}+b(u-1)\,, \qquad t>0\, ,\quad x\in \Omega\,,\label{P2} \\
u(t,x)&=\theta_1\,,\quad w(t,x)=\theta_2\,,\qquad t>0\,,\quad x\in\partial\Omega\,,\label{P3} \\
u(0,x)&=  u_0(x)\,, \quad w(0,x)=  w_0(x)\,,  \quad \partial_tw(0,x)=  w_0'(x)\,,\qquad x\in \Omega
\,,\label{P4}
\end{align}
\end{subequations}
where~$w=w(t,x)$ denotes  the varying width of the gap and~$u=u(t,x)$ is the local pressure of the gas. The (sufficiently) smooth bounded subset $\Omega$  of $\R^n$ with $n\in\{1,2\}$ represents the shape of the membrane and the ground plate.  
The constants $a,b,\theta_1,\theta_2>0$ and $\sigma\ge 0$ in~\eqref{P2}~-~\eqref{P3} as well as the initial data $u_0$, $w_0$, and $w_0'$ in~\eqref{P4} are given. The degeneracy of~\eqref{P1} and the singularity in~\eqref{P2} occurring for a vanishing gap width $w(t,x)=0$ capture the instabilities related to a touchdown of the membrane on the ground plate.  A detailed account of the modeling aspects is given in~\cite{GHL24,GHL23} to which we refer.

In~\cite{GHL24} the short-time existence of solutions to this  MEMS model
is established for the one-dimensional case $n=1$. The approach chosen therein consists of solving first the hyperbolic equation for $w$ (via a fixed point argument for a given, fixed $u$) and so reducing the coupled system~\eqref{PPPP} to a single fixed point equation for $u$ which is then solved using parabolic semigroup theory. Instead of decoupling the system, we proceed differently and solve the mild formulation of~\eqref{PPPP} simultaneously for $u$ and $w$, also relying on semigroup theory for semilinear hyperbolic and quasilinear parabolic equations described in \cite{A04,Pazy} respectively \cite{AmannTeubner,LQPP}. A key ingredient for this is the observation that mild solutions to the hyperbolic equation~\eqref{P2} enjoy {\it a priori} H\"older continuity properties with respect to time (and values in spaces of sufficiently high spatial regularity) that guarantee an evolution operator for the quasilinear parabolic equation~\eqref{P1} in the sense of~\cite{LQPP} (see also Remark~\ref{key} and Proposition~\ref{PropApp} below). In this way
we provide a considerably shorter proof for local existence including also the case $n=2$:

\begin{thm}\label{T}
Let $r>0$ and $u_0\in H^{2+r}(\Omega)$ with $u_0>0$ in $\Omega$ and $u_0=\theta_1$ on $\partial\Omega$. Let $w_0\in H^2(\Omega)\cap C^1(\bar\Omega)$, $w_0'\in H^1(\Omega)$ with $w_0>0$ in~$\Omega$ and $w_0-\theta_2=w_0'=0$  on~$\partial\Omega$. Then there is a unique solution
\begin{align*}
&u\in C^1\big([0,T],L_2(\Omega)\big)\cap C\big([0,T],H^2(\Omega)\big)\,,\\
&w\in C^2\big([0,T],L_2(\Omega)\big)\cap C^1\big([0,T],H^1(\Omega)\big)\cap C\big([0,T],H^2(\Omega)\big)
\end{align*}
to \eqref{PPPP} on some interval $[0,T]$.
\end{thm}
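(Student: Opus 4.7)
The plan is to construct the solution by a Banach fixed-point iteration that decouples the two equations. After homogenizing the boundary values via $u=\theta_1+U$ and $w=\theta_2+W$, I would expand $\partial_t(wu)$ in \eqref{P1} and divide by $w$ (which is positive near $t=0$) to rewrite the parabolic equation in quasilinear non-divergence form
\[
\partial_t u = w^2 u\,\Delta u + 3wu\,\nabla w\cdot\nabla u + w^2|\nabla u|^2 - w^{-1} u\,\partial_t w,
\]
whose principal part $-w^2 u\,\Delta$ is uniformly elliptic as long as $u$ and $w$ stay bounded away from zero. Equation~\eqref{P2} is then a standard semilinear second-order wave equation with forcing $b(u-1)-a/w^2$ and homogeneous Dirichlet data for $W$.

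For $T>0$ small (to be chosen) I would work on a closed subset $\mathcal{M}_T$ consisting of pairs $(\hat u,\hat w)$ with the regularity stated in the theorem, satisfying the initial conditions, staying bounded from below by a fixed positive constant, and whose norms are bounded by a constant depending only on the initial data. Define a map $\Phi$ on $\mathcal{M}_T$ in two steps. First, solve the inhomogeneous \emph{linear} wave equation
\[
\partial_t^2 w+\sigma\,\partial_t w-\Delta w = -\frac{a}{\hat w^{\,2}} + b(\hat u - 1)
\]
with the prescribed initial and boundary data via semigroup theory on $H^1_0(\Omega)\times L_2(\Omega)$; the Sobolev embedding $H^2(\Omega)\hookrightarrow C(\bar\Omega)$ (valid since $n\le 2$) together with the positive lower bound on $\hat w$ ensures that the source lies in $C([0,T],H^1(\Omega))$, which yields $w\in C^2([0,T],L_2)\cap C^1([0,T],H^1)\cap C([0,T],H^2)$. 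Second, plug this $w$ into the quasilinear parabolic equation above and solve for $u$ with the prescribed initial and boundary data, invoking maximal regularity theory for quasilinear parabolic problems (Amann / Pr\"uss--Simonett). Precisely here the extra regularity $u_0\in H^{2+r}(\Omega)$ is used: it places $u_0$ in the interior of the relevant trace space, so that the linearization of the coefficient at $u_0$ admits a strong $C^1([0,T],L_2)\cap C([0,T],H^2)$ solution on a time interval whose length is uniform over $\mathcal{M}_T$.

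The main difficulty is to turn $\Phi$ into a contraction. Positivity preservation and the self-mapping property of $\mathcal{M}_T$ follow for small $T$ by standard short-time continuity estimates for the parabolic and hyperbolic problems. For the contraction step I would subtract two iterates $(u_i,w_i)=\Phi(\hat u_i,\hat w_i)$, $i=1,2$: the difference $w_1-w_2$ is controlled in the energy norm by an $L^1$-in-time norm of the source difference, and that source difference is Lipschitz in $(\hat u,\hat w)$ uniformly on $\mathcal{M}_T$ thanks to the uniform lower bound on $\hat w$; the difference $u_1-u_2$ satisfies a linear parabolic equation with the same principal structure whose right-hand side involves differences of the coefficients entering \eqref{P1} (including $\partial_t\hat w$) multiplied by derivatives of $u_i$, and can be estimated by maximal regularity with a prefactor that vanishes as $T\to 0$. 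Combining the two estimates and choosing $T$ sufficiently small yields contraction, hence a unique fixed point, which is the sought solution. Uniqueness within the stated regularity class follows by a standard Gronwall argument applied to the same two estimates.
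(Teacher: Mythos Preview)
Your overall scheme---a Banach fixed-point iteration after homogenising the boundary data---is the same skeleton the paper uses, although the paper linearises \emph{both} equations at each step (freezing the full coefficient $\mathcal{A}(\hat u,\hat w)$ and solving a linear parabolic problem via an evolution operator), works in an $L_q$-framework with $q>n$ so that $H^1_q(\Omega)$ is an algebra, and contracts in a genuinely weaker norm $E_\beta\times\mathbb{H}_\alpha$ rather than in the full $H^2$-topology. Your sequential decoupling (first solve for $w$, then solve a \emph{quasilinear} parabolic problem for $u$) is more implicit and makes the uniform-in-$\mathcal{M}_T$ lifespan and the contraction estimate for $u_1-u_2$ harder to pin down, but it is not wrong in principle.

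There is, however, a genuine gap in your wave-equation step. You assert that the forcing
\[
F:=-\frac{a}{\hat w^{2}}+b(\hat u-1)
\]
lies in $C([0,T],H^1(\Omega))$ and that this yields $w\in C^2([0,T],L_2)\cap C^1([0,T],H^1)\cap C([0,T],H^2)$. The relevant semigroup acts on $H^1_0(\Omega)\times L_2(\Omega)$, and the standard criterion for a strong solution with values in $D(-\Delta)=H^2\cap H^1_0$ requires either $F\in C([0,T],D((-\Delta)^{1/2}))=C([0,T],H^1_0(\Omega))$ or $F\in C^1([0,T],L_2(\Omega))$. Your $F$ is \emph{not} in $H^1_0$: on $\partial\Omega$ one has $\hat u=\theta_1$, $\hat w=\theta_2$, so $F=-a/\theta_2^2+b(\theta_1-1)$, which is a nonzero constant in general. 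Merely having $F\in H^1$ does not produce the claimed $H^2$-regularity of $w$.

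The paper circumvents this by using the second criterion: once the mild solution is constructed, one checks that $f(u,w)\in C^1([0,T],L_2(\Omega))$ (this uses $u\in C^1([0,T],L_2)$, obtained from the parabolic part), and then upgrades to a strong wave solution. In your setup the same route is available, since your iteration set $\mathcal{M}_T$ already carries $\hat u\in C^1([0,T],L_2)$ and $\hat w\in C^1([0,T],H^1)$; alternatively you could subtract the constant boundary value of $F$ and absorb it into a stationary shift so that the remaining source does lie in $H^1_0$. Either fix is short, but the argument as written does not go through.
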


The initial values are compatible with the Dirichlet boundary conditions and the regularity of strong solutions at $t=0$. In fact, the solution component $u$ has even better regularity properties than stated in Theorem~\ref{T}, see Remark~\ref{R17}. Moreover, the solution can be extended to a maximal solution on $[0,T_{max})$ existing as long as $u(t)>0$ and $w(t)>0$ in $\bar \Omega$ as well as the $C^1$-norm of $(u,w)$ does not blow up. It is worth pointing out that a common feature in MEMS models~\cite{LW_BAMS} is the possible occurrence of  finite time quenching $\inf_{x\in \Omega}w(t,x) \to 0$ as $t\to T_{max}$  preventing global existence of solutions. \\

A similar result as Theorem~\ref{T} can also be shown for a related fourth-order equation when the Laplacian $\Delta$ in \eqref{P2} is replaced by $-\Delta^2+\Delta$  subject to pinned or clamped boundary conditions (see Theorem~\ref{T3} below for details). This corresponds to a MEMS device involving  an elastic plate instead of a membrane. 

In fact, Theorem~\ref{T} (and Theorem~\ref{T3}) can be regarded as a special case of a more general setting including a quasilinear parabolic equation coupled to a semilinear wave equation of the form
\begin{subequations}\label{G}
\begin{align}
\partial_t u&=\Ac(u,w)u +g(u,w,\partial_t w)\,,\quad t>0\,,\qquad u(0)=u_0\,,\\
\partial_t^2 w+\sigma \partial_t w&=-A w+f(u,w,\partial_t w)\,,\quad t>0\,,\qquad (w(0),\partial_tw(0))=(w_0,w_0')\,,
\end{align}
\end{subequations}
where $\Ac(u,w)$ are generators of analytic semigroups on a Banach space and $-A$ is a generator of a cosine function on a Hilbert space (see Section~\ref{Sec3} below for details).

In the next Section~\ref{Sec2} we first identify~\eqref{PPPP} as a special case of~\eqref{G} (see~\eqref{Uxy} below). The latter is treated in Section~\ref{Sec3} in an abstract functional analytic framework that is not restricted to the particular setting of~\eqref{PPPP}. The main result of this research is  Theorem~\ref{TT} on the local well-posedness of~\eqref{G} that is established using semigroup theory and  then implies Theorem~\ref{T} for the particular case~\eqref{PPPP} as shown in Section~\ref{Sec4}. In Section~\ref{Sec5} we briefly show how to apply Theorem~\ref{TT} for the case of the fourth-order problem~\eqref{XPPPP} including a bi-Laplacian.

\section{Functional Formulation of the Problem}\label{Sec2}

We demonstrate how to express the system~\eqref{PPPP} in the abstract form of problem~\eqref{G} and list relevant properties of the  functions involved.

Setting $$(\bar u,\bar w):=(u-\theta_1, w-\theta_2)\,,\qquad (\bar u_0,\bar w_0):=(u_0-\theta_1, w_0-\theta_2)$$ and dropping then again the bars for simplicity, problem~\eqref{PPPP} is equivalent to
\begin{subequations}\label{PPPP5} 
\begin{align}
\partial_t u \, &=   \frac{1}{w+\theta_2} \mathrm{div}\big((w+\theta_2)^3(u+\theta_1)\nabla u\big)-\frac{\partial_t w\,(u+\theta_1)}{w+\theta_2} \,, \qquad t>0\, ,\quad x\in \Omega\, ,\label{P15}\\ 
\partial_t^2 w+\sigma \partial_t w&= \Delta w-\frac{a}{(w+\theta_2)^2}+b(u+\theta_1-1)\,, \qquad t>0\, ,\quad x\in \Omega\,,\label{P25} \\
u(t,x)&= w(t,x)=0\,,\qquad t>0\,,\quad x\in\partial\Omega\,,
 \label{P35}\\
u(0,x)&=  u_0(x)\,, \quad w(0,x)=   w_0(x)\,,  \quad \partial_tw(0,x)=  w_0'(x)\,,\qquad x\in \Omega\,,\label{P45}
\end{align}
\end{subequations}
as long as $w>-\theta_2$. In the following, we shall focus on~\eqref{PPPP5} with initial data $u_0\in H^{2+r}(\Omega)$, $w_0\in H^2(\Omega)\cap C^1(\bar\Omega)$,  and $w_0'\in  H^1(\Omega)$ satisfying $u_0+\theta_1>0$ and $w_0+\theta_2>0$ in $\bar\Omega$ and 
$$
u_0=w_0=w_0'=0\quad \text{on }\ \partial\Omega\,.
$$ 

For technical reasons we handle the parabolic equation for $u$ in an $L_q$-setting, denoting by $H_q^s(\Omega)$ the scale of Bessel potential spaces \cite{AmannTeubner} (that coincide in the Hilbert space case $q=2$ with the Sobolev-Slobodeckii spaces $H^s(\Omega)=H_2^s(\Omega)$). 

Since $r>0$ and $n\in\{1,2\}$ we may choose $q\in (2,4)$ and $\alpha, \beta, \mu\in (0,1)$ such that
\begin{equation}\label{g5x}
\begin{split}
&\min\{r,1/2\}> n/2-n/q\,,\qquad\alpha\in \big(n/2-n/q,1/2\big)\,,\\ 
&\mu\in (2-2/q-\alpha,3/2-\alpha)\,,\qquad  2\beta\in (1,2)\,.
\end{split}
\end{equation}
Then, since $q>2\ge n$ and $\mu+\alpha>2-2/q\ge 1+n/2-n/q$, we have
\begin{align}\label{g5}
 H_q^{2\beta}(\Omega)\hookrightarrow H_q^{1}(\Omega)\hookrightarrow C(\bar\Omega)\,,\qquad   H^{1+\alpha}(\Omega)\hookrightarrow H^{\mu+\alpha}(\Omega)\hookrightarrow H_q^{1}(\Omega)\hookrightarrow C(\bar\Omega)
\end{align}
and 
$$
u_0\in H^{2+r}(\Omega)\hookrightarrow H_q^{2}(\Omega)\hookrightarrow C^1(\bar\Omega)\,,\qquad w_0\in  H^{2}(\Omega)\hookrightarrow H^{1+\alpha}(\Omega)
$$
with $u_0+\theta_1\ge 2\varsigma>0$ and $w_0+\theta_2\ge 2\varsigma>0$ in $\bar\Omega$ for some $\varsigma>0$. Hence we may choose $\ve>0$ such that
\begin{subequations}\label{g99}
\begin{align}\label{g90}
u+\theta_1\ge \varsigma\ \text{ in }\ \bar\Omega\,,\qquad u\in {\mathbb{B}}_{H_q^{2\beta}}(u_0,\ve) \,,\\
 w+\theta_2\ge \varsigma\ \text{ in }\ \bar\Omega\,,\qquad w\in {\mathbb{B}}_{H^{\mu+\alpha}}(w_0,\ve) \,.\label{g90b}
\end{align}
\end{subequations}
Set 
\begin{align}
\Ac(u,w)v:&= \frac{1}{w+\theta_2} \mathrm{div}\big((w+\theta_2)^3(u+\theta_1)\nabla v\big)\nonumber \\
&=  \mathrm{div}\big((w+\theta_2)^2(u+\theta_1)\nabla v\big)+ (w+\theta_2)(u+\theta_1)\nabla w\cdot\nabla v\label{AAq}
\end{align}
for $u, w\in C^1(\bar\Omega)$ and $v$ belonging to
$$
H_{q,D}^{2}(\Omega):=H_q^{2}(\Omega)\cap \mathring{H}_q^{1}(\Omega)
$$ 
(i.e. $ H_{q,D}^{2}(\Omega)$ incorporates homogeneous Dirichlet boundary conditions). Using the embeddings~\eqref{g5} along with the fact that $H_q^1(\Omega)$ is an algebra (since $q>n$) we obtain that the pointwise multiplications
$$
H^{\mu+\alpha}(\Omega)\bullet H^{\mu+\alpha}(\Omega)\bullet H_q^{2\beta}(\Omega)\bullet H_q^1(\Omega) \longrightarrow H_q^1(\Omega)
$$
and, choosing $\epsilon>0$ small with $\mu+\alpha-1-\epsilon> n/2-n/q$, 
$$
H^{\mu+\alpha}(\Omega)\bullet  H_q^{2\beta}(\Omega)\bullet H^{\mu+\alpha-1}(\Omega)\bullet H_q^1(\Omega) \longrightarrow H^{\mu+\alpha-1-\epsilon}(\Omega)\hookrightarrow L_q(\Omega)
$$
are continuous (see \cite[Theorem~4.1]{AmannMult} for the latter).
Consequently, using the first multiplication for the divergence term and the second multiplication for the first-order term of $\Ac(u,w)v$ in \eqref{AAq}, we derive that
\begin{subequations}\label{g66}
\begin{align}\label{g6b}
\Ac\in C^{1-}\big(H_q^{2\beta}(\Omega)\times H^{\mu+\alpha}(\Omega),\mathcal{L}(H_{q,D}^2(\Omega),L_q(\Omega))\big)\,,
\end{align}
where $C^{1-}$ means (locally) Lipschitz continuous. Moreover, since $u_0, w_0\in C^1(\bar\Omega)$ satisfy~\eqref{g99}, $\Ac(u_0,w_0)$ is a second-order normally elliptic differential operator in divergence form with $C^1$-coefficients. Hence, it follows from e.g. \cite[Theorem~4.1, Examples~4.3]{AmannTeubner} that
\begin{align}\label{g6}
\Ac(u_0,w_0)\in \mathcal{H}\big(H_{q,D}^2(\Omega),L_q(\Omega)\big)\,,
\end{align}
\end{subequations}
where $\mathcal{H}(E_1,E_0)$ denotes the set of  generators of analytic semigroups on the Banach space~$E_0$ with domain $E_1$.  Also observe the  identities
\begin{align}\label{int}
H_{q,D}^{2\theta}(\Omega):=\big[ L_q(\Omega), H_{q,D}^{2}(\Omega)\big]_\theta \doteq \left\{\begin{array}{lll}
\{v\in H_q^{2\theta}(\Omega)\,;\, v=0 \text{ on } \partial\Omega\}\, ,& 1/q <2\theta\le 2\,,\\[2mm]
H_q^{2\theta}(\Omega)\, ,& 0 \le 2\theta < 1/q\,,
\end{array}
\right.
\end{align}
for complex interpolation \cite[Theorem~5.2]{AmannTeubner}. Setting formally
\begin{align}\label{g}
g(u,w,w'):=-\frac{w'(u+\theta_1)}{w+\theta_2} \,, 
\end{align}
we may now reformulate~\eqref{P15} subject to the initial and boundary condition as quasilinear parabolic problem
\begin{align*}
\partial_t u=\Ac(u,w)u +g(u,w,\partial_t w)\,,\quad t>0\,,\qquad u(0)=u_0\,,
\end{align*}
in $L_q(\Omega)$.\\

We focus next on the hyperbolic problem for $w$. Let $H_{D}^{2\theta}(\Omega):=H_{2,D}^{2\theta}(\Omega)$ be as in~\eqref{int}. 
Clearly, the  Laplacian subject to homogeneous Dirichlet boundary conditions
\begin{align*}
-A:=\Delta\in \mathcal{H}\big(H_D^{2}(\Omega),L_2(\Omega)\big)
\end{align*}
is the generator of an analytic semigroup on $L_2(\Omega)$. In fact, 
\begin{equation}
\begin{split}\label{Ass}
&\text{$A: H_D^2(\Omega)\subset L_2(\Omega)\to L_2(\Omega)$ is  a closed, densely defined,}\\
&\text{self-adjoint, positive operator  with  compact inverse}\,.
\end{split}
\end{equation}
Introducing
\begin{equation}\label{f}
f(u,w):=-\frac{a}{(w+\theta_2)^2}+b(u+\theta_1-1)\,,
\end{equation}
we can rewrite~\eqref{PPPP5} now in the form
\begin{subequations}\label{Uxy}
\begin{align}
\partial_t u&=\Ac(u,w)u +g(u,w,\partial_t w)\,,\quad t>0\,,\qquad u(0)=u_0\,,\label{U1xs}\\
\partial_t^2 w+\sigma \partial_t w&=-A w+f(u,w)\,,\quad t>0\,,\qquad (w(0),\partial_tw(0))=(w_0,w_0')\,.\label{U2xs}
\end{align}
\end{subequations}
To handle the semilinear terms we define the open subsets
$$
 O_\beta:=\mathbb{B}_{H_{q,D}^{2\beta}}(u_0,\ve)\,,\qquad \mathbb{O}_\alpha:=\mathbb{B}_{H_{D}^{1+\alpha}}(w_0,\ve/c_0) \times H_D^{\alpha}(\Omega)
$$
of $ H_{q,D}^{2\beta}(\Omega)$ respectively $H_{D}^{1+\alpha}(\Omega)\times H_{D}^{\alpha}(\Omega)$, where $\ve>0$ stems from~\eqref{g99} and
$c_0>0$ denotes the norm of the embedding $H_{D}^{1+\alpha}(\Omega)\hookrightarrow H_{D}^{\mu+\alpha}(\Omega)$.
Recalling that $\alpha>n/2-n/q$ we find $2\eta\in (0,1/q)$ and $\epsilon>0$ small with $\alpha-\epsilon-n/2>2\eta-n/q$, hence the embedding 
$$
H^{\alpha-\epsilon}(\Omega)\hookrightarrow H_q^{2\eta}(\Omega)=H_{q,D}^{2\eta}(\Omega)\,.
$$ Therefore, noticing that
$$
(u+\theta_1)(w+\theta_2)^{-1}\in H_q^1(\Omega)\,,\qquad u\in O_\beta\,,\qquad (w,w')\in \mathbb{O}_\alpha\,,
$$
due to~\eqref{g5},~\eqref{g90b}, it follows from the continuity of the multiplication (see \cite[Theorem~4.1]{AmannMult})
$$
H_q^1(\Omega)\bullet  H^{\alpha}(\Omega)\longrightarrow H^{\alpha-\epsilon}(\Omega)\hookrightarrow H_{q,D}^{2\eta}(\Omega)
$$
and~\eqref{g} that
\begin{align}\label{gg1}
g\in C^{1-}\big(O_\beta\times \mathbb{O}_\alpha,H_{q,D}^{2\eta}(\Omega)\big)\,,
\end{align}
while~\eqref{g5},~\eqref{g90b},~\eqref{f}, and 
$$
H_q^1(\Omega)\hookrightarrow H^{\alpha}(\Omega)=H_{D}^{\alpha}(\Omega)
$$ 
(since $q>2$ and $\alpha<1/2$) ensure that (of course,~$f$ is independent of the $w'$-component)
\begin{align}\label{gg2}
 f\in C^{1-}\big(O_\beta\times \mathbb{O}_\alpha,H_{D}^{\alpha}(\Omega)\big)\,.
\end{align}
To guarantee later on sufficient regularity of solutions it is worth noting that
\begin{align}\label{fg}
&f(\hat u,\hat w)=-\frac{a}{(\hat w+\theta_2)^2}+b(\hat u+\theta_1-1)\in C^1\big([0,T],L_2(\Omega)\big)
\end{align}
whenever  $\hat u\in C^1\big([0,T],L_2(\Omega)\big)$ and $\hat w\in C\big([0,T],H^{1+\alpha}(\Omega)\big)\cap C^1\big([0,T],H^{\alpha}(\Omega)\big)$ with $\hat w(t)+\theta_2\ge \varsigma$ in $\Omega$.
Also note from $H^{2+r}(\Omega)\hookrightarrow H_q^2(\Omega)$ (due to \eqref{g5x}) that
\begin{align}\label{init}
&u_0 \in O_\beta\cap H_{q,D}^{2}(\Omega) \,,\qquad (w_0,w_0')\in  \mathbb{O}_\alpha\cap \big(H_{D}^{2}(\Omega)\times H_{D}^{1}(\Omega)\big) 
\end{align}
 by the assumptions of Theorem~\ref{T} and \eqref{g5}. In fact, since $u_0,w_0\in H^{2+r}(\Omega)$ and since $H^{1+r}(\Omega)$ is an algebra, it readily follows from~\eqref{AAq} that  
\begin{align}\label{init2}
u_0\in H_{q,D}^2(\Omega)\,,\qquad \Ac(u_0,w_0)u_0 \in H^{r}(\Omega)\hookrightarrow H_{q,D}^{2\eta}(\Omega)
\end{align}
since we may make $\eta>0$ smaller to guarantee $0<2\eta<\min\{r-n/2+n/q,1/q\}$. That is, the initial value $u_0$ belongs to the domain of the $H_{q,D}^{2\eta}(\Omega)$-realization of the generator $\Ac(u_0,w_0)\in\mathcal{H}\big(H_{q,D}^{2}(\Omega),L_q(\Omega)\big)$.\\

The previous considerations ensure that problem~\eqref{Uxy}  (and thus problem~\eqref{PPPP}) fits into the more general framework of Theorem~\ref{TT} of the  next section. We shall then continue from here in Section~\ref{Sec4} and finish off the proof of Theorem~\ref{T}  by applying~Theorem~\ref{TT}.

\section{Main Theorem}\label{Sec3}

As just pointed out above, Theorem~\ref{T} is a special case of a more general setting: Consider
\begin{subequations}\label{Ux}
\begin{align}
\partial_t u&=\Ac(u,w)u +g(u,w,\partial_t w)\,,\quad t>0\,,\qquad u(0)=u_0\,,\label{U1x}\\
\partial_t^2 w+\sigma \partial_t w&=-A w+f(u,w,\partial_t w)\,,\quad t>0\,,\qquad (w(0),\partial_tw(0))=(w_0,w_0')\,,\label{U2x}
\end{align}
\end{subequations}
where $\Ac(u,w)\in\mathcal{L}(E_1,E_0)$  for some continuously and densely injected Banach couple $E_1\hookrightarrow E_0$ is such that $\Ac(u_0,w_0)\in\mathcal{H}(E_1,E_0)$ (i.e. $\Ac(u_0,w_0)$ with domain $E_1$ generates an analytic semigroup on~$E_0$), and
\begin{equation}
\begin{split}\label{As}
&\text{$A: D(A)\subset H\to H$ is  a closed, densely defined, self-adjoint,}\\
&\text{positive operator  with bounded and compact inverse}
\end{split}
\end{equation}
on a Hilbert space $H$ with scalar product $(\cdot\vert\cdot)$. Here, positive operator means that $(Ax\vert x)\ge 0$ for $x\in D(A)$.  Let~\mbox{$\sigma\ge 0$}.\\

We formulate~\eqref{Ux} as a coupled system of two first order equations  relying on results for cosine functions \cite[Section 5.5 $\&$ Section 5.6]{A04}, see also Appendix~\ref{Appendix}. To this end note that \eqref{As} ensures that the powers $A^z$ for $z\in \C$ are well-defined closed operators (bounded for~$z\le 0$). Consequently, the matrix operator  
$$
\mathbb{A}:=\left(\begin{matrix} 0 & 1\\ -A & -\sigma\end{matrix}\right)\,,\quad D(\mathbb{A}):=D(A)\times D(A^{1/2})\,,
$$
generates a strongly continuous semigroup $(e^{t\mathbb{A}})_{t\ge 0}$ on the Hilbert space $$\mathbb{H}:=D(A^{1/2})\times H$$ (in fact, it generates a group $(e^{t\mathbb{A}})_{t\in \R}$). 
Using the notion ${\bf w}=(w,w')$ and setting
$$
{\bf F}(u,{\bf w}):=\left(\begin{matrix}
0 \\ f(u,w,w')
\end{matrix}\right)\,,
$$
we can write \eqref{U2x} as a semilinear hyperbolic Cauchy problem
$$
\partial_t {\bf w}=\mathbb{A}{\bf w} +{\bf F}(u,{\bf w})\,,\quad t>0\,,\quad {\bf w}(0)={\bf w}_0:=(w_0,w_0')\,,
$$
in $\mathbb{H}$. In fact, for greater flexibility (and to cope with the particular case~\eqref{PPPP5}) we shift this problem to the interpolation space (for some $\alpha\in [0,1)$) 
$$
\mathbb{H}_\alpha:=[\mathbb{H},D(\mathbb{A})]_\alpha \doteq D(A^{(1+\alpha)/2})\times D(A^{\alpha/2})\,,
$$
where we recall  (due to the Fourier series representation of $A^{\alpha}$ or \cite[Theorem~1.15.3]{Triebel}) that
\begin{align}\label{interpoll}
\big[D(A^{\alpha_0}),D(A^{\alpha_1})\big]_\theta \doteq D(A^{(1-\theta)\alpha_0+\theta\alpha_1}) \,,\qquad \theta\in [0,1]\,,\quad 0\le \alpha_0 <\alpha_1\,.
\end{align}
Then, the $\mathbb{H}_\alpha$-realization  $\mathbb{A}_\alpha$ of $\mathbb{A}$,
given by
$$
\mathbb{A}_\alpha {\bf w}:=\mathbb{A}{\bf w}\,,\quad {\bf w}\in D(\mathbb{A}_\alpha):=\{{\bf w}\in D(\mathbb{A})\,;\, \mathbb{A}{\bf w}\in \mathbb{H}_\alpha\}=D(A^{1+\alpha/2})\times D(A^{(1+\alpha)/2})\,,
$$  
generates a strongly continuous semigroup $(e^{t\mathbb{A}_\alpha})_{t\ge 0}$ on $\mathbb{H}_\alpha$ according to~\cite[Chapter~V]{LQPP}. 
We shall then consider~\eqref{Ux} in the  equivalent form
\begin{subequations}\label{Uxx}
\begin{align}
\partial_t u&=\Ac(u,w)u +g(u,w,\partial_t w)\,,\quad t>0\,,\qquad u(0)=u_0\,,\label{U1xx}\\
\partial_t {\bf w}&=\mathbb{A}_\alpha{\bf w} +{\bf F}(u,{\bf w})\,,\quad t>0\,,\qquad {\bf w}(0)={\bf w}_0=(w_0,w_0')\,.\label{U2xx}
\end{align}
\end{subequations}

In the following, let $(\cdot,\cdot)_\theta$ be arbitrary admissible interpolation functors~\cite[I.Section~2.11]{LQPP} and set
$$
E_\theta:=(E_0,E_1)_\theta\,,\qquad \theta\in [0,1]\,.
$$ 
Let $O_\beta\subset E_\beta$ and ${\bf \mathbb{O}}_\alpha\subset \mathbb{H}_\alpha$ be open sets for some $\alpha,\beta\in [0,1)$.

\begin{thm}\label{TT}
Let $\alpha,\beta,\mu\in [0,1)$  and $\tau\in (\beta,1]$. Consider initial values $u_0\in O_\beta\cap E_\tau$ and $(w_0,w_0')\in \mathbb{O}_\alpha$, let
\begin{equation}\label{i1}
\mathcal{A}\in C^{1-}\big(E_\beta\times D(A^{(\alpha+\mu)/2}),\mathcal{L}(E_1,E_0)\big)\,,\qquad \Ac(u_0,w_0)\in\mathcal{H}(E_1,E_0)\,,
\end{equation}
and suppose~\eqref{As}. Moreover, assume that
\begin{equation}\label{i2}
g\in C^{1-}(O_\beta\times \mathbb{O}_\alpha, E_0)\,,\qquad f\in C^{1-}\big(O_\beta\times \mathbb{O}_\alpha, D(A^{\alpha/2})\big)\,.
\end{equation}

{\bf (a)} There is a unique mild solution
\begin{equation}\label{g0}
u\in C\big([0,T],E_\beta\big)\,,\qquad {\bf w}=(w,\partial_t w)\in C\big([0,T],D(A^{(1+\alpha)/2})\times D(A^{\alpha/2})\big)
\end{equation}
to the Cauchy problem~\eqref{Uxx} on some interval $[0,T]$. \\

{\bf (b)}  If 
\begin{equation}\label{g1}
g\in C(O_\beta\times \mathbb{O}_\alpha,  E_\eta)\ \text{ for some $\eta>0$}
\end{equation}
or if 
\begin{equation}\label{g2}
g\in C^{1-}\big(O_\beta\times D(A^{(\alpha+\mu)/2}),E_0\big)\ \text{ is independent of~$w'$}\,,
\end{equation}
 then $u\in C^1\big((0,T],E_0\big)\cap C\big((0,T],E_1\big)$ is a strong solution to~\eqref{U1x}. \\

{\bf (c)}  Let $u_0\in E_1$. If 
\begin{equation}\label{g11}
\text{\eqref{g1} is satisfied and }\ \Ac(u_0,w_0)u_0\in E_\eta
\end{equation}
or if \eqref{g2} is satisfied,
then 
\begin{equation}\label{uu1}
u\in C^1\big([0,T],E_0\big)\cap C\big([0,T],E_1\big)
\end{equation} is a strict solution to~\eqref{U1x}. In this case,
if 
\begin{equation}
\begin{split}\label{g33}
&f(u,w,\partial_t w)\in C\big([0,T], D(A^{1/2})\big)
\end{split}
\end{equation}
or if   
\begin{equation}
\begin{split}\label{g44}
&f\, \text{ is independent of~$w'$ and }\ f(u,w)\in C^1\big([0,T], H\big) \,,
\end{split}
\end{equation}
then 
$$
w \in C^2\big([0,T], H\big)\cap C^1\big([0,T], D(A^{1/2})\big)\cap C\big([0,T], D(A)\big)
$$
is a strong solution to~\eqref{U2x} provided that $(w_0,w_0')\in \mathbb{O}_\alpha\cap D(A)\times D(A^{1/2})$.
\end{thm}

We emphasize that one may rely on the regularity properties~\eqref{g0} and~\eqref{uu1} when checking~\eqref{g33} or~\eqref{g44}.

\begin{proof} 
{\bf (i)} It follows from~\eqref{i1} and \cite[I.Theorem~1.3.1]{LQPP} that there are $\ve>0$, $\kappa\ge 1$,  and $\omega>0$ such that\footnote{The notation $\Ac\in\mathcal{H}\big(E_1,E_0;\kappa,\omega\big)$ means that $\omega-\Ac\in\ml is(E_1,E_0)$ and $$\kappa^{-1}\le\frac{\|(\lambda-\Ac)x\|_{E_0}}{\vert\lambda\vert\,\|x\|_{E_0}+\| x\|_{E_1}}\le \kappa\,,\qquad x\in E_1\setminus\{0\}\,,\quad \mathrm{Re}\,\lambda\ge \omega\,, $$ see \cite[I.Section~1.2]{LQPP}. Note that $\mathcal{H}\big(E_1,E_0;\kappa,\omega\big)\subset \mathcal{H}(E_1,E_0)$.}
\begin{align}\label{g6s}
\Ac(u,w)\in \mathcal{H}\big(E_1,E_0;\kappa,\omega\big)\,,\quad (u,w)\in\bar\Bb_{E_\beta\times D(A^{(\alpha+\mu)/2})}\big((u_0,w_0),\ve\big)\,,
\end{align}
and
\begin{align}\label{g6bs}
\|\Ac(u,w)-\Ac(\hat u,\hat w)\|_{\mathcal{L}(E_1,E_0)}\le c\, \|(u, w)-(\hat u,\hat w)\|_{E_\beta\times D(A^{(\alpha+\mu)/2})}\,,\nonumber\\ (u,w), (\hat u,\hat w)\in\bar\Bb_{E_\beta\times D(A^{(\alpha+\mu)/2})}\big((u_0,w_0),\ve\big)\,, 
\end{align}
for some constant $c=c(u_0,w_0)>0$. 
Let $\rho\in (0,\min\{\tau-\beta,1-\mu\})$ and let
$$
c_*:=\max\big\{1,\|i\|_{\ml(D(A^{(\alpha+1)/2}),D(A^{(\alpha+\mu)/2}))}\big\}\,,
$$
where $i:D(A^{(\alpha+1)/2})\hookrightarrow D(A^{(\alpha+\mu)/2})$ is the natural inclusion. Writing $z=(u,{\bf w})$ with ${\bf w}=(w,w')$ in the following and noticing that $z_0=(u_0,{\bf w}_0)\in O_\beta\times \mathbb{O}_\alpha$ with open subsets $O_\beta \subset E_\beta$ and $\mathbb{O}_\alpha\subset \mathbb{H}_\alpha$, it follows from \eqref{i2} that we may assume without loss of generality that
\begin{align}\label{Lipsch}
\|g(z)-g(\hat z)\|_{E_0}+\|{\bf F}(z)-{\bf F}(\hat z)\|_{\mathbb{H}_\alpha}&\le c_1\|z-\hat z\|_{E_\beta\times \mathbb{H}_\alpha}\,,\quad z,\hat z\in \bar\Bb_{E_\beta\times \mathbb{H}_\alpha}\left(z_0,\ve/c_*\right)\,,
\end{align}
for some constant $c_1=c_1(z_0)>0$ and  that
$$
\|(u,w)-(u_0,w_0)\|_{E_\beta\times D(A^{(\alpha+\mu)/2})}\le 
\ve\,,\qquad z\in \bar\Bb_{E_\beta\times \mathbb{H}_\alpha}\big(z_0,\ve/c_*\big)\,.
$$
Given $T\in (0,1)$ (to be specified later) we then introduce
\begin{align*}
\mathcal{V}_T:=\Big\{z=(u,&{\bf w})\in C\big([0,T],E_\beta\times \mathbb{H}_\alpha\big)\,;\,w\in C^1\big([0,T], D(A^{\alpha/2})\big)\,,\\
& \|z(t)-z_0\|_{E_\beta\times \mathbb{H}_\alpha}\le \ve/c_*\,,\,\ \|u(t)-u(s)\|_{E_\beta}\le \vert t-s\vert^\rho\,,\\
& \|\partial_t w(t)\|_{D(A^{\alpha/2})}\le \|{\bf w}_0\|_{\mathbb{H}_\alpha}+\ve/2c_*\,,\, t,s\in [0,T] \Big\}\,,
\end{align*}
where $z_0=(u_0,{\bf w}_0)$ and the notation ${\bf w}=(w,w')$ is used throughout. Then $\mathcal{V}_T$ is a complete metric space when equipped with the metric 
$$
d_{\mathcal{V}_T}(z,\hat z):=\|z-\hat z\|_{C([0,T], E_\beta\times \mathbb{H}_\alpha)}+ \|\partial_t w-\partial_t \hat w\|_{C([0,T],D(A^{\alpha/2}))}\,.
$$
Then, for $z=(u,{\bf w})\in \mathcal{V}_T$, we have by interpolation (see~\eqref{interpoll})
\begin{equation}\label{hold}
w\in C\big([0,T],D(A^{(1+\alpha)/2})\big)\cap C^1\big([0,T],D(A^{\alpha/2})\big)\hookrightarrow C^{1-\mu}\big([0,T],D(A^{(\alpha+\mu)/2})\big)
\end{equation}
and it thus follows from \eqref{g6bs} and $\rho<1-\mu$ that
\begin{subequations}\label{LP}
\begin{equation}\label{g8s}
\begin{split}
&\sup_{0\le s<t\le T}\frac{\|\mathcal{A}(u(t),w(t))-\mathcal{A}(u(s),w(s))\|_{\mathcal{L}(E_1,E_0)}}{\vert t-s\vert^\rho}\le r(u_0,{\bf w}_0)
\end{split}
\end{equation}
for some constant $r(u_0,{\bf w}_0)>0$ (independent of $z\in  \mathcal{V}_T$) and from  \eqref{g6s} that
\begin{align} \label{reso}
\mathcal{A}\big(u(t),w(t)\big)\in \mathcal{H}\big(E_1,E_0;\kappa,\omega\big) \,,\quad t\in [0,T]\,.
\end{align}
\end{subequations}
Now, \cite[II.Corollary~4.4.2]{LQPP} and \eqref{LP} imply that for each $z=(u,{\bf w})\in \mathcal{V}_T$, the operator $\mathcal{A}(u,w)$ generates a parabolic evolution operator
$$
U_{\mathcal{A}(u,w)}(t,s)\,,\quad 0\le s\le t\le T\,,
$$
on $E_0$ with regularity subspace $E_1$ and that we may apply the results of \cite[II.Section~5]{LQPP}.
Introduce now 
\begin{subequations}
\begin{align} 
\Gamma_1(z)(t)&:=U_{\mathcal{A}(u,w)}(t,0)u_0+\int_0^t U_{\mathcal{A}(u,w)}(t,s)\, g(z(s))\,\rd s\,,\label{Gamma1s}\\
{\bf \Gamma}_2(z)(t)&:=e^{t\mathbb{A}_\alpha}{\bf w}_0+\int_0^t e^{(t-s)\mathbb{A}_\alpha}\, {\bf F}(z(s))\,\rd s\,,\label{Gamma2s}
\end{align}
\end{subequations}
for  $t\in [0,T]$ and $z=(u,{\bf w})\in \mathcal{V}_T$ recalling $u_0\in O_\beta \cap E_\tau$ and ${\bf w}_0=(w_0,w_0')\in  \mathbb{H}_\alpha$. Then, mild solutions to~\eqref{Uxx} correspond to fixed points of the operator $\Gamma=(\Gamma_1,{\bf \Gamma}_2)$. \\

{\bf (ii)}  We claim that  $\Gamma: \mathcal{V}_T\to \mathcal{V}_T$ is a contraction for $T\in (0,1)$ sufficiently small. To see this, let $z=(u,{\bf w})\in \mathcal{V}_T$. It then follows from \eqref{Gamma1s}, \eqref{Lipsch}, \eqref{LP}, and \cite[II.Theorem~5.3.1]{LQPP} that (for some $c>0$ depending only on the parameters in \eqref{LP})
\begin{align*}
\| \Gamma_1(z)(t)-\Gamma_1(z)(s)\|_{E_\beta}\le c\,(t-s)^{\tau-\beta}\, \big(\|u_0\|_{E_\tau}+\|g(z)\|_{ C([0,T],E_0)})\le \vert t-s\vert^\rho
\end{align*}
(we recall that $\rho<\tau-\beta$) and thus, in particular,
\begin{align*}
\| \Gamma_1(z)(t)-u_0\|_{E_\beta}\le T^\rho\le \frac{\ve}{2 c_*}
\end{align*}
for $0\le s\le t\le T$ with $T\in (0,1)$ sufficiently small. Moreover,  we deduce  from \eqref{Lipsch} that ${\bf F}(z)\in C([0,T],\mathbb{H}_\alpha)$ so that \eqref{Gamma2s}, the assumption ${\bf w}_0\in \mathbb{H}_\alpha$, and Proposition~\ref{PropApp}  entail that 
$$
{\bf \Gamma}_2(z)=(\Gamma_2(z),\Gamma_2(z)')\in C([0,T],\mathbb{H}_\alpha)$$ 
with 
\begin{equation}\label{ghs}
\Gamma_2(z)\in C^1\big([0,T],D(A^{\alpha/2})\big)\,, \quad\partial_t\Gamma_2(z)=\Gamma_2(z)'\,, 
\end{equation}
and
\begin{align*}
\| {\bf \Gamma}_2(z)(t)-{\bf w}_0\|_{\mathbb{H}_\alpha}&\le  \|e^{t\mathbb{A}_\alpha}{\bf w}_0-{\bf w}_0\|_{\mathbb{H}_\alpha}+\int_0^t \|e^{(t-s)\mathbb{A}_\alpha}\|_{\ml(\mathbb{H}_\alpha)}\, \|{\bf F}(z(s))\|_{\mathbb{H}_\alpha}\,\rd s\le  \frac{\ve}{2 c_*}
\end{align*}
for $t\in [0,T]$ with $T\in (0,1)$ sufficiently small since $(e^{t\mathbb{A}_\alpha})_{t\ge 0}$ is strongly continuous on~$\mathbb{H}_\alpha$. 
In particular, \eqref{ghs} implies
$$
\|\partial_t\Gamma_2(z)(t)\|_{D(A^{\alpha/2})}\le \| {\bf \Gamma}_2(z)(t)\|_{\mathbb{H}_\alpha}\le  \|{\bf w}_0\|_{\mathbb{H}_\alpha}+ \frac{\ve}{2 c_*}\,,\quad t\in [0,T]\,.
$$
Consequently, $\Gamma: \mathcal{V}_T\to \mathcal{V}_T$ is well-defined.\\

{\bf (iii)}  To check the Lipschitz property consider $z=(u,{\bf w})\in \mathcal{V}_T$ and $\hat z=(\hat u,{\bf \hat w})\in \mathcal{V}_T$. Then \eqref{LP} and \cite[II.Theorem~5.2.1]{LQPP} imply
\begin{align*}
\| \Gamma_1&(z)(t)-\Gamma_1(\hat z)(t)\|_{E_\beta}\\
&\le c\,\Big\{t^{\tau-\beta}\,\|\mathcal{A}(u,w)-\mathcal{A}(\hat u,\hat w)\|_{C([0,T],\ml(E_1,E_0))}\,
\big[\,\|u_0\|_{E_\tau}+t^{1-\tau}\|g(z)\|_{ C([0,T],E_0)}\,\big]\\
&\qquad\qquad +t^{1-\beta}\|g(z)-g(\hat z)\|_{ C([0,T],E_0)}
\Big\}\\
&\le \frac{1}{4}\, d_{\mathcal{V}_T}\big(z,\hat z\big)
\end{align*}
for $t\in [0,T]$ with $T\in (0,1)$ sufficiently small, where we used \eqref{g6bs} and \eqref{Lipsch} for the last estimate.
Moreover,  due to \eqref{Lipsch} we have
\begin{align*}
\| &{\bf \Gamma}_2(z)(t)-{\bf \Gamma}_2(\hat z)(t)\|_{\mathbb{H}_\alpha}\le \int_0^t \|e^{(t-s)\mathbb{A}_\alpha}\|_{\ml(\mathbb{H}_\alpha)}\, \|{\bf F}(z(s))-{\bf F}(\hat z(s))\|_{\mathbb{H}_\alpha}\,\rd s\le \frac{1}{8}\, d_{\mathcal{V}_T}\big(z,\hat z\big)
\end{align*}
for $t\in [0,T]$ with $T\in (0,1)$ sufficiently small. 
Consequently, taking into account~\eqref{ghs}, we deduce that
$$
d_{\mathcal{V}_T}\big(\Gamma(z),\Gamma(\hat z)\big)\le \frac{1}{2}\, d_{\mathcal{V}_T}\big(z,\hat z\big)\,,\quad z, \hat z\in \mathcal{V}_T\,.
$$
 Banach's fixed point theorem now ensures that there is a unique $z=(u,{\bf w})\in \mathcal{V}_T$  with $z=\Gamma(z)$; that is, $(u,{\bf w})$ is a mild solution to~\eqref{Ux}. 

This proves statement~{\bf (a)} of Theorem~\ref{TT}. \\

{\bf (iv)} Setting for $t\in [0,T]$
\begin{subequations}\label{AA}
\begin{equation}
\tilde\Ac(t):=\Ac(u(t),w(t))\,,\qquad \tilde g(t):=g\big(u(t),w(t),\partial_t w(t)\big)\,,
\end{equation}
we see that $u$ is a mild solution to the linear Cauchy problem
\begin{equation}\label{AA21b}
\partial_t u=\tilde\Ac(t)u +\tilde g(t)\,,\quad t\in (0,T]\,,\qquad u(0)=u_0\,.
\end{equation}
\end{subequations}
If \eqref{g1} holds, then  $\tilde g\in C([0,T], E_\eta)$ with $\eta>0$ and we infer from~\cite[II.Theorem~1.2.2]{LQPP} that $$u\in  C^1((0,T],E_0)\cap C((0,T],E_1)$$ is a strong solution to~\eqref{U1x}. If \eqref{g2} holds, then \eqref{Lipsch} and~\eqref{hold} imply $\tilde g\in C^\rho([0,T], E_0)$ and we obtain again that $u$ is a strong solution to~\eqref{U1x} with regularity properties as above in view of~\cite[II.Theorem~1.2.1]{LQPP}.

This proves statement~{\bf (b)} of Theorem~\ref{TT}. \\

{\bf (v)}  Let now $u_0\in E_1$. Then \eqref{g2} or \eqref{g11}  both imply~\eqref{uu1}  due to~\cite[II.Theorem~1.2.1]{LQPP} respectively~\cite[II.Theorem~1.2.2]{LQPP}; that is,
$$
u\in C^1\big([0,T],E_0\big)\cap C\big([0,T],E_1\big)
$$
is a strict solution to~\eqref{U1x}. 
Set now $\tilde{{\bf F}}(t):={\bf F}(z(t))$ and note that ${\bf w}$ is a mild solution to
the linear Cauchy problem
$$
\partial_t {\bf w}=\mathbb{A}{\bf w} +\tilde{{\bf F}}(t)\,,\quad t\in (0,T]\,,\qquad {\bf w}(0)={\bf w}_0\,.
$$
Then \eqref{g33} implies $\tilde{{\bf F}}\in C([0,T],D(\mathbb{A}))$ while~\eqref{g44}
yields \mbox{$\tilde{{\bf F}}\in C^1([0,T],\mathbb{H})$}. In either case we derive from
 Proposition~\ref{PropApp} that
$$
w \in C^2\big([0,T], H\big)\cap C^1\big([0,T], D(A^{1/2})\big)\cap C\big([0,T], D(A)\big)
$$
is a strong solution to~\eqref{U2x} provided that $(w_0,w_0')\in D(\mathbb{A})$. This proves statement~{\bf (c)} of Theorem~\ref{TT}.
\end{proof}

\begin{rem}\label{key}
It is worth emphasizing that one of the key ingredients of the proof of Theorem~\ref{TT} is the observation that the first component $w$ of a \emph{mild} solution ${\bf w}=(w,w')$ to the hyperbolic equation~\eqref{U2xx} enjoys a H\"older continuity property with respect to time and values in spaces of sufficiently high spatial regularity (see~\eqref{hold}) as stated in Proposition~\ref{PropApp}. In fact, this ensures the H\"older continuity of the operator $t\mapsto \mathcal{A}(u(t),w(t))$ and thus that the associated evolution operator is well-defined according to~\cite[II.Corollary~4.4.2]{LQPP}.  
\end{rem}

\section{Proof of Theorem~\ref{T}}\label{Sec4}

We can now complete the proof of Theorem~\ref{T}. From Section~\ref{Sec2} we know that problem~\eqref{PPPP} is equivalent to \eqref{PPPP5} (recalling that $(u,w)$ is identified with $(u-\theta_1,w-\theta_2)$) which, in turn, is a special case of~\eqref{Ux} (see~\eqref{Uxy}).

Choose $q\in (2,4)$ and $\alpha, \beta, \mu\in (0,1)$ as in \eqref{g5x} and $\eta\in (0,1)$ as in \eqref{gg1} and \eqref{init2}. Setting 
$$
E_0:=L_q(\Omega)\,,\qquad E_1:=H_{q,D}^2(\Omega)\,,
$$
we notice from~\eqref{int} that $E_\theta\doteq H_{q,D}^{2\theta}(\Omega)$ (with complex interpolation functor) for $2\theta\not=1/q$ while the operator $A:= -\Delta$ in $H:=L_2(\Omega)$ with domain $H_D^{2}(\Omega)$ satisfies~\eqref{As} (see~\eqref{Ass}). Moreover,~\eqref{int} and \eqref{interpoll} imply
\begin{equation}\label{domains}
D(A^s)\doteq H_D^{2s}(\Omega)\,,\qquad s\in [0,1]\,,\quad 2s\not= 1/2\,.
\end{equation}
It now follows from~\eqref{g66}~-~\eqref{init2} that problem~\eqref{Uxy} (and thus problem~\eqref{PPPP}) fits into the framework of Theorem~\ref{TT} with assumptions~\eqref{i1}, \eqref{i2}, \eqref{g1}, \eqref{g11}, and \eqref{g44}  satisfied (and $\tau=1$). Therefore, Theorem~\ref{TT} implies that \eqref{PPPP5} admits a unique solution
\begin{align*}
&u\in C^1\big([0,T],L_q(\Omega)\big)\cap C\big([0,T],H_{q}^2(\Omega)\big)\,,\\
&w\in C^2\big([0,T],L_2(\Omega)\big)\cap C^1\big([0,T],H^1(\Omega)\big)\cap C\big([0,T],H^2(\Omega)\big)\,.
\end{align*}
Since $q>2$, this proves Theorem~\ref{T}. \qed

\begin{rem}\label{R17}
As shown above, $u$ belongs in fact to $C^1\big([0,T],L_q(\Omega)\big)\cap C\big([0,T],H_{q}^2(\Omega)\big)$ for some $q>2$. Parabolic smoothing effects ensure additional regularity properties. For instance, the regularity of $(u,w)$ stated in Theorem~\ref{T} implies that $u$ solves a (linear) equation of the form~\eqref{AA} with $\tilde{\mathcal{A}}\in C^\rho([0,T],\mathcal{H}(H_D^2(\Omega),L_2(\Omega)))$ (see~\eqref{LP}) and $\tilde{g}\in C^\rho([0,T],L_2(\Omega))$ for some $\rho>0$ (see~\eqref{g}). The maximal regularity result of~\cite[I.Theorem~1.2.1]{LQPP} yields $u\in C^\rho((0,T], H^2(\Omega))\cap  C^{1+\rho}((0,T], L_2(\Omega))$. Moreover, since $\tilde{g}\in C([0,T],H_{q,D}^{2\eta}(\Omega))$ by~\eqref{gg1}, a higher spatial regularity of $u$ is derived from~\cite[I.Theorem~1.2.2]{LQPP} taking into account the regularity~\eqref{init2} of the initial value.
\end{rem}

\section{A Fourth-Order Problem}\label{Sec5}

As pointed out in the introduction, Theorem~\ref{TT} also applies to certain fourth-order wave equations. Indeed, consider
\begin{subequations}\label{XPPPP} 
\begin{align}
\partial_t (wu) \, &=    \mathrm{div}\big(w^3u\nabla u\big) \,, \qquad t>0\, ,\quad x\in \Omega\, ,\label{XP1}\\ 
\partial_t^2 w+\sigma \partial_t w&=-D_1\Delta^2w+D_2\Delta w-\frac{a}{w^2}+b(u-1)\,, \qquad t>0\, ,\quad x\in \Omega\,,\label{XP2} \\
u(t,x)&=\theta_1\,,\quad w(t,x)=\theta_2\,,\quad \mathcal{B} w(t,x)=0\,,\qquad t>0\,,\quad x\in\partial\Omega\,,\label{XP3} \\
u(0,x)&=  u_0(x)\,, \quad w(0,x)=  w_0(x)\,,  \quad \partial_tw(0,x)=  w_0'(x)\,,\qquad x\in \Omega
\,,\label{XP4}
\end{align}
with 
\begin{align}\label{XP5}
\mathcal{B}w:=(1-\delta)\partial_\nu w+\delta \Delta w\ \text{ on }\ \partial\Omega\,,
\end{align}
\end{subequations}
where $\delta\in\{0,1\}$.  Equations~\eqref{XPPPP} govern the gap width $w$ and the gas pressure $u$ for a MEMS device involving an elastic plate (instead of a membrane) of shape  $\Omega$, where $\Omega\subset\R^n$ with $n\in\{1,2\}$ is assumed to be a (sufficiently) smooth bounded set. The elastic plate is either clamped at its boundary (corresponding to $\delta=0$) or is hinged along its boundary so that it is free to rotate (corresponding to $\delta=1$).
We assume that $D_1>0$ and $D_2\ge 0$ and that $a,b,\theta_1,\theta_2>0$ and $\sigma\ge 0$. 

This  MEMS model was introduced in~\cite{GHL23} and the short-time existence of solutions was established for the pinned case $\delta=1$ (for both cases $n=1,2$). 
We derive the result for pinned and clamped boundary conditions simultaneously as a consequence of Theorem~\ref{TT} (the assumptions on the initial values are compatible with the regularity of  the solution):

\begin{thm}\label{T3}
Let $r>0$ and $u_0\in H^{2+r}(\Omega)$ with $u_0>0$ in $\Omega$ and $u_0=\theta_1$ on $\partial\Omega$. Let $(w_0,w_0')\in H^4(\Omega)\times H^2(\Omega)$ with $w_0>0$ in~$\Omega$ and $w_0-\theta_2=\B w_0=0$  on $\partial\Omega$ and $w_0'=(1-\delta)\B w_0'=0$  on $\partial\Omega$. Then there is a unique solution
\begin{align*}
&u\in C^1\big([0,T],L_2(\Omega)\big)\cap C\big([0,T],H^2(\Omega)\big)\,,\\
&w\in C^2\big([0,T],L_2(\Omega)\big)\cap C^1\big([0,T],H^2(\Omega)\big)\cap C\big([0,T],H^4(\Omega)\big)
\end{align*}
to \eqref{XPPPP} on some interval $[0,T]$.
\end{thm}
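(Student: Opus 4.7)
My plan is to reduce~\eqref{XPPPP} to the abstract Cauchy problem~\eqref{Ux} and invoke Theorem~\ref{TT}, mirroring the procedure of Sections~\ref{Sec2} and~\ref{Sec4}. After the standard shift $(\bar u,\bar w):=(u-\theta_1, w-\theta_2)$, the parabolic equation for $u$ and its accompanying nonlinearity $g$ are identical to~\eqref{P15}, so I keep $E_0:=L_q(\Omega)$, $E_1:=H_{q,D}^2(\Omega)$, and parameters $q,\alpha,\beta,\mu,\eta$ as in~\eqref{g5x}. Consequently the continuity and Lipschitz statements~\eqref{g66},~\eqref{gg1},~\eqref{gg2},~\eqref{fg}, and the membership~\eqref{init2} carry over verbatim; only the operator $A$ and the hyperbolic part require new work.

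\textbf{The fourth-order operator.}
On $H:=L_2(\Omega)$ I define
$$
A w := \Delta^2 w - \Delta w, \qquad D(A) := \{w \in H^4(\Omega) : w = 0 \text{ and } \B w = 0 \text{ on } \partial\Omega\}.
$$
To verify~\eqref{As} I use the coercive, continuous, symmetric bilinear form
$$
a(v,w) := \int_\Omega \Delta v\, \Delta w\,\mathrm{d}x + \int_\Omega \nabla v\cdot\nabla w\,\mathrm{d}x
$$
on the form domain $V_\delta := H_0^2(\Omega)$ in the clamped case $\delta=0$ and $V_\delta := H^2(\Omega)\cap H_0^1(\Omega)$ in the pinned case $\delta=1$; then Lax--Milgram, standard biharmonic regularity, and Rellich compactness yield that $A$ is self-adjoint, positive, has compact inverse, and satisfies $D(A^{1/2})\doteq V_\delta$. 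For $\alpha\in(0,1)$ sufficiently small the interpolation identity~\eqref{interpoll} then gives
$$
D(A^{\alpha/2}) \doteq H^{2\alpha}(\Omega), \qquad D(A^{(1+\alpha)/2}) \hookrightarrow H^{2+2\alpha}(\Omega) \hookrightarrow H^{1+\alpha}(\Omega) \hookrightarrow C(\bar\Omega).
$$
These embeddings are \emph{strictly stronger} than those used in Section~\ref{Sec2}, so the multiplication arguments establishing the hypotheses~\eqref{i1},~\eqref{i2} of Theorem~\ref{TT} go through for this new $A$ without modification. The boundary conditions assumed on $(w_0,w_0')$ in the statement of Theorem~\ref{T3} are precisely those that place $w_0\in D(A)$ and $w_0'\in D(A^{1/2})$, so $(w_0,w_0')\in \mathbb{O}_\alpha \cap D(\mathbb{A})$; the conditions on $u_0$ are unchanged from Section~\ref{Sec4}.

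\textbf{Application and main obstacle.}
With all hypotheses of Theorem~\ref{TT} parts~(a)--(c) verified --- namely~\eqref{i1},~\eqref{i2},~\eqref{g1},~\eqref{g11}, together with~\eqref{g44} via~\eqref{fg} --- I obtain a unique solution $(u,w)$ with $u\in C^1([0,T],L_q(\Omega))\cap C([0,T],H_{q,D}^2(\Omega))$ and $w\in C^2([0,T],L_2(\Omega))\cap C^1([0,T],D(A^{1/2}))\cap C([0,T],D(A))$, which translates via the embeddings $L_q(\Omega)\hookrightarrow L_2(\Omega)$, $D(A^{1/2})\hookrightarrow H^2(\Omega)$, and $D(A)\hookrightarrow H^4(\Omega)$ into the regularity claimed in Theorem~\ref{T3}. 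The hard part is the clamped case $\delta=0$: for pinned conditions ($\delta=1$) one has the factorization $A = A_0(A_0+I)$ with $A_0:=-\Delta_D$, so $A$ and $A_0$ share eigenfunctions and the fractional-power calculus reduces immediately to that of the Dirichlet Laplacian; in the clamped case this decoupling fails, and one has to rely on the abstract form-based spectral theory together with classical $H^4$-regularity for the biharmonic operator with clamped data to identify $D(A^{1/2})\doteq H_0^2(\Omega)$ and to validate the interpolation formulas above.
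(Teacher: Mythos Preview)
Your approach is correct and mirrors the paper's own proof: recast~\eqref{XPPPP} as~\eqref{Ux} with the fourth-order $A=\Delta^2-\Delta$ on $H_\B^4(\Omega)$, verify~\eqref{As} and the fractional-domain identifications, and invoke Theorem~\ref{TT} with~\eqref{i1},~\eqref{i2},~\eqref{g11},~\eqref{g44}.

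One caveat: you cannot literally keep $\alpha,\mu$ from~\eqref{g5x}. The target space for $f$ in~\eqref{i2} is now $D(A^{\alpha/2})\doteq H_\B^{2\alpha}(\Omega)$ rather than $H_D^{\alpha}(\Omega)$, and since $f(u,w)=-a/(w+\theta_2)^2+b(u+\theta_1-1)$ equals the nonzero constant $-a/\theta_2^2+b(\theta_1-1)$ on $\partial\Omega$, one must have $2\alpha<1/2$ so that $H_\B^{2\alpha}(\Omega)=H^{2\alpha}(\Omega)$ carries no boundary condition; this in turn forces a different admissible range for $\mu\in[0,1)$. The paper makes this explicit by taking $2\alpha\in(n/2-n/q,1/2)$ and $\mu\in(1-\alpha,1)$ rather than the ranges in~\eqref{g5x}. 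Your phrase ``for $\alpha$ sufficiently small'' shows you sensed this, but it contradicts ``keep parameters as in~\eqref{g5x}''.

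Finally, the paper does not separate the clamped and pinned cases: both are handled at once via the interpolation identity $H_\B^{4\theta}(\Omega)\doteq[L_2(\Omega),H_\B^4(\Omega)]_\theta$ from~\cite{Triebel}, so the factorization $A=A_0(A_0+I)$ in the pinned case and the form-based square-root argument in the clamped case, while correct, are not needed.
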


\begin{proof} The proof is very much the same as for Theorem~\ref{T} and we thus only sketch it and point out  the new aspects.
Arguing as in Section~\ref{Sec2} by shifting $u$ and $w$ we may  focus on
\begin{subequations}\label{XUx}
\begin{align}
\partial_t u&=\Ac(u,w)u +g(u,w,\partial_t w)\,,\quad t>0\,,\qquad u(0)=u_0\,,\label{XU1x}\\
\partial_t^2 w+\sigma \partial_t w&=-A w+f(u,w)\,,\quad t>0\,,\qquad (w(0),\partial_tw(0))=(w_0,w_0')\,,\label{XU2x}
\end{align}
\end{subequations}
with $\Ac$ defined in~\eqref{AAq} and $g$ and $f$ in \eqref{g} respectively~\eqref{f}. The only difference now is that we  consider the fourth-order operator 
\begin{align*}
-A:=-D_1\Delta^2+D_2\Delta\in \mathcal{L}\big(H_\B^{4}(\Omega),L_2(\Omega)\big)\,,
\end{align*}
where its domain $H_\B^{4}(\Omega)$ incorporates the homogeneous pinned ($\delta=0$) or clamped ($\delta=1$) boundary conditions. More generally, we set for $s\in [0,4]$
$$
H_\B^{s}(\Omega):=\left\{\begin{array}{lll}
& \big\{v\in H^{s}(\Omega)\,;\, v=\B v=0 \text{ on } \partial\Omega\big\}\, , & s>3/2+\delta\ ,\\[2mm]
& \big\{v\in H^{s}(\Omega)\,;\, v=0 \text{ on } \partial\Omega\big\}\, , & 1/2<s<3/2+\delta\ ,\\[2mm]
&   H^{s}(\Omega)\, , & 0\le s<1/2\ .
\end{array}
\right.
$$
Then $H_{\B}^{4\theta}(\Omega)$  coincides with the complex interpolation space	
\begin{align}\label{Xinterpol}
H_\B^{4\theta}(\Omega) \doteq \big[ L_2(\Omega), H_\B^{4}(\Omega)\big]_\theta\ ,\quad 4\theta\in [0,4]\setminus\left\{1/2 , 3/2+\delta\right\}\ ,
\end{align}
up to equivalent norms, see \cite[Theorem 4.3.3]{Triebel}.
Moreover, $-A\in \mathcal{H}\big(H_\B^{4}(\Omega),L_2(\Omega)\big)$ is the generator of an analytic semigroup on $L_2(\Omega)$ with exponential decay, e.g., see \cite[Remarks~4.2]{AmannTeubner} or \cite[Theorem 7.2.7]{Pazy}. In fact, we have again that 
\begin{equation}
\begin{split}\label{XAss}
&\text{$A: H_\B^{4}(\Omega)\subset L_2(\Omega)\to L_2(\Omega)$ is  a closed, densely defined,}\\
&\text{self-adjoint, positive operator  with  compact inverse}\,.
\end{split}
\end{equation}
Moreover, \eqref{Xinterpol} and \eqref{interpoll} entail
\begin{equation}\label{domainss}
D(A^s)\doteq H_\B^{4s}(\Omega)\,,\qquad s\in [0,1]\,,\quad 4s\not= 1/2\,,\, 3/2+\delta\,.
\end{equation}
As in Section~\ref{Sec2} we choose $q>2$ and $\alpha, \beta, \mu\in (0,1)$ such that
\begin{align*}
r\ge n/2-n/q\,,\quad 2\alpha\in \big(n/2-n/q,1/2\big)\,,\quad \mu\in (1-\alpha,1)\,, \quad 2\beta\in (1,2)\,,
\end{align*}
and define the open subsets 
$$
 O_\beta:=\mathbb{B}_{H_{q,D}^{2\beta}}(u_0,\ve)\,,\qquad \mathbb{O}_\alpha:=\mathbb{B}_{H_{\B}^{2+2\alpha}}(w_0,\ve/c_0) \times H_{\B}^{2\alpha}(\Omega)
$$
of $ H_{q,D}^{2\beta}(\Omega)$ respectively $H_{\B}^{2+2\alpha}(\Omega)\times H_{\B}^{2\alpha}(\Omega)$, where
$c_0>0$ denotes the norm of the embedding $H_{\B}^{2+2\alpha}(\Omega)\hookrightarrow H_{\B}^{2\mu+2\alpha}(\Omega)$ and $\varepsilon>0$ is such that, for some $\varsigma>0$,
\begin{align*}
u+\theta_1\ge \varsigma\ \text{ in }\ \bar\Omega\,,\qquad u\in O_\beta=\mathbb{B}_{H_{q,D}^{2\beta}}(u_0,\ve) \,,\\
 w+\theta_2\ge \varsigma\ \text{ in }\ \bar\Omega\,,\qquad w\in {\mathbb{B}}_{H^{2\mu+2\alpha}}(w_0,\ve) \,. 
\end{align*}
Exactly as in Section~\ref{Sec2} we have
\begin{align}\label{Xg6b}
\Ac\in C^{1-}\big(H_q^{2\beta}(\Omega)\times H^{2\mu+2\alpha}(\Omega),\mathcal{L}(H_{q,D}^2(\Omega),L_q(\Omega))\big)
\end{align}
and
\begin{align}\label{Xg6}
\Ac(u_0,w_0)\in \mathcal{H}\big(H_{q,D}^2(\Omega),L_q(\Omega)\big)\,,
\end{align}
while
\begin{align}\label{Xgg1}
g\in C^{1-}\big(O_\beta\times \mathbb{O}_\alpha,H_{q,D}^{2\eta}(\Omega)\big)\,,\qquad
 f\in C^{1-}\big(O_\beta\times \mathbb{O}_\alpha,H_{\B}^{2\alpha}(\Omega)\big)
\end{align}
for some $\eta>0$ small enough.
Moreover,
\begin{align}\label{Xfg}
&f(\hat u,\hat w)=-\frac{a}{(\hat w+\theta_2)^2}+b(\hat u+\theta_1-1)\in C^1\big([0,T],L_2(\Omega)\big)
\end{align}
whenever  $\hat u\in C^1\big([0,T],L_2(\Omega)\big)$ and $\hat w\in C\big([0,T],H^{2+2\alpha}(\Omega)\big)\cap C^1\big([0,T],H^{2\alpha}(\Omega)\big)$ with $\hat w(t)+\theta_2\ge \varsigma$ in $\Omega$.
Finally, by premise of the theorem,
\begin{align}\label{Xinit}
&u_0 \in O_\beta\cap H_{q,D}^{2}(\Omega) \,,\qquad (w_0,w_0')\in  \mathbb{O}_\alpha\cap \big(H_{\B}^{4}(\Omega)\times H_{\B}^{2}(\Omega)\big) 
\end{align}
and, as before, since $u_0,w_0\in H^{2+r}(\Omega)$,
\begin{align}\label{Xinit2}
\Ac(u_0,w_0)u_0 \in H_q^{2\eta}(\Omega)=H_{q,D}^{2\eta}(\Omega)
\end{align}
 making $\eta>0$ smaller, if necessary, so that $0<2\eta<\min\{r,1/q\}$. Setting 
$$
E_0:=L_q(\Omega)\,,\quad E_1:=H_{q,D}^2(\Omega)\,,\quad H:=L_2(\Omega)\,,
$$
it then follows from~\eqref{XAss}~-~\eqref{Xinit2} that problem~\eqref{XUx} (and thus problem~\eqref{XPPPP}) fits into the framework of Theorem~\ref{TT} with assumptions~\eqref{i1}, \eqref{i2}, \eqref{g1}, \eqref{g11}, and \eqref{g44}  satisfied (and $\tau=1$). Therefore, Theorem~\ref{TT} implies Theorem~\ref{T3}.
\end{proof}

\begin{appendix}

\nequation
\aequation

\section{}\label{Appendix}

Let $H$  be a Hilbert space and $A: D(A)\subset H\to H$ be  a closed, densely defined, self-adjoint, positive operator  with bounded and compact inverse, where its domain  $D(A)$ is equipped with the graph norm.
Then the square root $A^{1/2}$ (more generally: $f(A)$ for $f:(0,\infty)\to \C$) is a well-defined closed operator on $H$ by Fourier series representation.

\begin{prop}\label{PropApp}
Suppose~\eqref{As} and let $\sigma\in\R$.
The matrix operator 
\begin{equation*}
\mathbb{A}:= \begin{pmatrix} 0&1\\ -A&-\sigma \end{pmatrix}\ , \quad  D(\mathbb{A}):= D(A) \times
D(A^{1/2})\,,
\end{equation*}
generates a strongly continuous group on the Hilbert space $\mathbb{H}:=D(A^{1/2})\times H$ (with an exponential decay if $\sigma>0$). Consider ${\bf w_0}\in \mathbb{H}$ and 
$$ 
{\bf F}:=\left(\begin{matrix} 0\\ f\end{matrix}\right)\,,\quad  f\in C([0,T],H)\,.
$$
Then
$$
{\bf w}(t)=e^{t\mathbb{A}}{\bf w_0}+\int_0^t e^{(t-s)\mathbb{A}}\, {\bf F}(s)\,\rd s\,, \quad t\in [0,T]\,,
$$
satisfies ${\bf w}=(w,w')\in C([0,T],\mathbb{H})$ and
$$
w\in C^1\big([0,T],H\big) \ \text{ with }\ \partial_t w=w'\,.
$$
If ${\bf w_0}\in D(\mathbb{A})$ and ${\bf F}\in C^1([0,T],\mathbb{H})+C([0,T],D(\mathbb{A}))$, then $${\bf w}\in C^1([0,T],\mathbb{H})\cap C([0,T],D(\mathbb{A}))$$
is a strong solution to
$$
\partial_t{\bf w}=\mathbb{A}{\bf w}+{\bf F}(t)\,,\quad t \in [0,T]\,,\qquad {\bf w}(0)={\bf w}_0\,.
$$
\end{prop}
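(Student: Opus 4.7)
My plan splits into three tasks: verifying that $\mathbb{A}$ generates a group on $\mathbb{H}$, establishing the strong-solution statement, and then deducing the $C^1$-in-$H$ regularity of the first component of a general mild solution. None of the steps is conceptually hard (the result is a fairly standard fact about damped wave equations), but the third step requires some care because the forcing is only continuous in $H$.

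For the group-generation task, I would rely on the functional calculus for $A$: since $A$ is self-adjoint, positive, and has compact inverse, the spectral theorem provides bounded operators $C(t):=\cos(tA^{1/2})$ and $S(t):=A^{-1/2}\sin(tA^{1/2})$ on $H$ for every $t\in\R$, forming a strongly continuous cosine function and its associated sine function in the sense of \cite[Section~5.5]{A04}. A direct computation (or that reference) shows that when $\sigma=0$ the operator $\mathbb{A}_0:=\left(\begin{smallmatrix} 0 & 1 \\ -A & 0\end{smallmatrix}\right)$ generates the group
$$
e^{t\mathbb{A}_0}=\left(\begin{matrix} C(t) & S(t) \\ -AS(t) & C(t)\end{matrix}\right),\qquad t\in\R,
$$
on $\mathbb{H}=D(A^{1/2})\times H$, using that $AS(t)=A^{1/2}\sin(tA^{1/2})$ is bounded from $D(A^{1/2})$ into $H$. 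Since the damping matrix $\left(\begin{smallmatrix} 0 & 0 \\ 0 & -\sigma\end{smallmatrix}\right)$ is bounded on $\mathbb{H}$, the bounded-perturbation theorem (applied to $\pm\mathbb{A}_0$) then yields that $\mathbb{A}$ itself generates a $C_0$-group on $\mathbb{H}$. Exponential decay when $\sigma>0$ I would read off from the spectral decomposition: on each eigenspace of $A$ with eigenvalue $\mu_k\ge\mu_1>0$, $\mathbb{A}$ reduces to a $2\times 2$ block with eigenvalues $\tfrac{1}{2}(-\sigma\pm\sqrt{\sigma^2-4\mu_k})$, and their real parts are uniformly bounded above by a strictly negative constant.

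For the strong-solution task, given ${\bf w}_0\in D(\mathbb{A})$ and ${\bf F}={\bf F}_1+{\bf F}_2$ with ${\bf F}_1\in C^1([0,T],\mathbb{H})$ and ${\bf F}_2\in C([0,T],D(\mathbb{A}))$, I would split the mild solution by linearity and apply \cite[II.Theorem~1.2.1 and II.Theorem~1.2.2]{LQPP} to the two contributions separately. The resulting ${\bf w}\in C^1([0,T],\mathbb{H})\cap C([0,T],D(\mathbb{A}))$ satisfies $\partial_t{\bf w}=\mathbb{A}{\bf w}+{\bf F}$, whose first row reads $\partial_t w=w'$ and whose second row is the asserted second-order equation.

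Finally, for the mild case I would argue by approximation. Pick ${\bf w}_0^{(n)}\in D(\mathbb{A})$ converging to ${\bf w}_0$ in $\mathbb{H}$ and $f^{(n)}\in C^1([0,T],H)$ converging to $f$ in $C([0,T],H)$ (for instance by extending $f$ beyond $[0,T]$ and mollifying in time). Then $(0,f^{(n)})\in C^1([0,T],\mathbb{H})$, so the preceding step promotes each mild solution ${\bf w}^{(n)}=(w^{(n)},(w^{(n)})')$ to a strong one and thus gives $\partial_t w^{(n)}=(w^{(n)})'$ in $H$ on $[0,T]$. Continuous dependence of the mild solution on the data, which is immediate from the variation-of-constants formula and the strong continuity of $(e^{t\mathbb{A}})_{t\in\R}$, yields ${\bf w}^{(n)}\to{\bf w}$ uniformly on $[0,T]$ in $\mathbb{H}$, whence $(w^{(n)})'\to w'$ uniformly in $H$. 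Passing to the limit in $w^{(n)}(t)=w^{(n)}(0)+\int_0^t (w^{(n)})'(s)\,\rd s$ then produces $w\in C^1([0,T],H)$ with $\partial_t w=w'$, as required. The one point of this plan that needs genuine attention is precisely this density step, since it is only through the regularization of $f$ that the result of the second task can be brought to bear on the generic mild setting.
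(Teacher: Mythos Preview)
Your argument is correct, but it diverges from the paper's in how you obtain the $C^1$-in-$H$ regularity of the first component. The paper simply writes out the explicit group for $\sigma=0$,
\[
e^{t\mathbb{A}}=\begin{pmatrix} \cos(tA^{1/2}) & \sin(tA^{1/2})A^{-1/2}\\ -\sin(tA^{1/2})A^{1/2} & \cos(tA^{1/2})\end{pmatrix},
\]
reads off the explicit formulas for $w(t)$ and $w'(t)$ from the variation-of-constants integral, and checks directly (using $S'(t)=C(t)$ and $S(0)=0$) that $\partial_t w=w'$ in $H$ whenever $f\in C([0,T],H)$; the case $\sigma\neq 0$ is then reduced to $\sigma=0$ by absorbing $-\sigma w'$ into the forcing. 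Your route---bounded perturbation for the group and a density/approximation argument for the $C^1$ property---is more abstract: it never needs the explicit semigroup formula and would transfer verbatim to settings where no such formula is available, at the price of being longer. One small caveat: the results you cite from \cite{LQPP} for the strong-solution step are stated there in a parabolic (analytic-semigroup) context; what you actually need is the classical $C_0$-semigroup fact (e.g.\ Pazy, Chapter~4) that forcing in $C^1([0,T],\mathbb{H})$ or in $C([0,T],D(\mathbb{A}))$ yields a strict solution, which the paper also invokes only as ``classical.''
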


\begin{proof}
Let $\sigma=0$. Then 
$$
e^{t\mathbb{A}}=\begin{pmatrix} \cos\big(tA^{1/2}\big)&\sin\big(tA^{1/2}\big) A^{-1/2}\\-\sin\big(tA^{1/2}\big) A^{1/2}&\cos\big(tA^{1/2}\big)
\end{pmatrix}\ ,\quad t\in \R\,,
$$
and the mild formulation for ${\bf w}=(w,w')$ yields explicit formulas for both $w$ and $w'$ which readily imply that ${\bf w}=(w,w')\in C([0,T],\mathbb{H})$ with
$$
w\in C^1\big([0,T],H\big) \ \text{ with }\ \partial_t w=w'\,.
$$
If $\sigma\not=0$, then replace $f$ by $f-\sigma w'\in C([0,T],H)$ to reduce the problem to the case $\sigma=0$. The statement about  strong solutions is classical.
\end{proof}

\end{appendix}

\bibliographystyle{siam}
\bibliography{Lit_MEMS}

\section*{Declarations}
\subsection*{Conflict of interest} The author declares that he has no conflict of interest.\\


\end{document}